\theoremstyle{plain}
  \newtheorem{theorem}{Theorem}[section]
  \newtheorem{lemma}{Lemma}[section]
  \newtheorem{corollary}{Corollary}[section]
  \newtheorem{remark}{Remark}[section]
  \newtheorem*{thma}{Theorem 1.3\textprime}
   \newcommand{\beqn}{\begin{eqnarray}}
   \newcommand{\eeqn}{\end{eqnarray}}
   \newcommand{\beqs}{\begin{eqnarray*}}
   \newcommand{\eeqs}{\end{eqnarray*}}
   \newcommand{\ban}{\begin{eqnarray*}}
   \newcommand{\nan}{\end{eqnarray*}}
   \newcommand{\beq}{\begin{equation}}
   \newcommand{\eeq}{\end{equation}}
  \newcommand{\RR}{{\mathbb R}}
  \newcommand{\ZZ}{{\mathbb Z}}
  \newcommand{\R}{\RR}
\newcommand{\p}{\partial}
\newcommand{\Om}{\Omega}
\newcommand{\pom}{{\p\Om}}
\newcommand{\bom}{{\overline\Om}}
\renewcommand{\det}{\mbox{det}}
  \newcommand{\Vol}{{\mbox{Vol}}}
  \numberwithin{equation}{section}
  \numberwithin{figure}{section}
\begin{document}

\title[Boundary regularity for second BVP of Monge-Amp\`ere equations]
{Boundary regularity for the second boundary-value problem of Monge-Amp\`ere equations in dimension two}


\author[S. Chen]
{Shibing Chen}
\address
{Centre for Mathematics and Its Applications,
The Australian National University,
Canberra, ACT 0200, AUSTRALIA}
\email{Shibing.Chen@anu.edu.au}

\author[J. Liu]
{Jiakun Liu}
\address
	{
	School of Mathematics and Applied Statistics,
	University of Wollongong,
	Wollongong, NSW 2522, AUSTRALIA}
\email{jiakunl@uow.edu.au}

\author[X.-J. Wang]
{Xu-Jia Wang}
\address
{Centre for Mathematics and Its Applications,
The Australian National University,
Canberra, ACT 0200, AUSTRALIA}
\email{Xu-Jia.Wang@anu.edu.au}

\thanks{This work was supported by ARC FL130100118 and ARC DP170100929.}

\subjclass[2000]{35J96, 35J25, 35B65.}

\keywords{Monge-Amp\`ere equation, global regularity.}

\begin{abstract}
In this paper, we introduce an iteration argument to prove that a convex solution to the Monge-Amp\`ere equation 
$\det D^2 u =f $ in dimension two subject to the natural boundary condition $Du(\Omega) = \Omega^*$ 
is $C^{2,\alpha}$ smooth up to the boundary. 
We establish the estimate under the sharp conditions that the inhomogeneous term $f\in C^\alpha$ 
and the domains are convex and $C^{1,\alpha}$ smooth. When $f\in C^0$ (resp.
$1/C<f<C$ for some positive constant $C$), we also obtain the global $W^{2,p}$ (resp. $W^{2,1+\epsilon}$) regularity. 
\end{abstract}

\maketitle

\baselineskip=16.4pt
\parskip=3pt

\section{Introduction}
In this paper we study the second boundary-value problem for the Monge-Amp\`ere equation, 
\beq\label{MAE1}
\left\{\begin{array}{ll}
\det D^2u = f &\quad\mbox{in }\Om,\\
Du(\Om) = \Om^*, &
\end{array}
\right.
\eeq
where $\Om, \Om^*$ are two bounded domains in $\R^2$ and $f$ is a given positive function. 
Problem \eqref{MAE1} received intensive attention in the last two decades, 
due to its wide range of applications, such as in optimal transportation, minimal Lagrangian graphs, 
seismology, image processing, machine learning, 
see for instances \cite{Bre, BW, EFY, HZTA, Vi} and references therein. 

The regularity of solutions to \eqref{MAE1} is a focus of attention in optimal transportation \cite {Vi}.
In a landmark paper \cite{C96}, Caffarelli established the global $C^{2,\alpha'}$ regularity for the problem \eqref{MAE1}, 
assuming that $\Om$ and $\Om^*$ are uniformly convex with $C^2$ boundary and $f\in C^\alpha(\overline\Om)$. 
Under stronger assumptions on domains and $f$, 
the global smooth solution was first obtained by Delano\"e \cite{De} in dimension two 
and later on by Urbas \cite{Ur} in higher dimensions. 
The uniform convexity and smoothness of domains played a critical role 
in the above mentioned papers.

Very recently in \cite{CLW} we discovered a different proof for the global $C^{2,\alpha}$ regularity, 
assuming the domains $\Om, \Om^*$ are convex only (instead of uniformly convex) with merely $C^{1,1}$ boundaries, 
and $f\in C^\alpha(\overline\Om)$ for the same $\alpha\in(0,1)$. 
Moreover, we obtained the global $W^{2,p}$ regularity for all $p\geq1$, when $f\in C^0(\overline\Om)$. 
We introduced a completely different method to obtain the uniform obliqueness, 
which is the key estimate in proving the boundary regularity. 

In particular in dimension two, it was proved that the uniform obliqueness holds under much weaker conditions,
including the case when $\Om$ and $\Om^*$ are bounded convex domains with $C^{1,\alpha}$ boundaries. 
See  \cite[\S4.1]{CLW}.
On the other hand, it was proved by Caffarelli \cite {C96} that
the uniform density  holds for general convex domains in dimensions two.
Combining the above properties, in this paper 
we are able to prove the following global regularity results for problem \eqref{MAE1} 
in dimension two.
The key point is that by using an iteration argument, we can improve the regularity from $u\in C^{1,\alpha}$ to $u\in C^{1,\alpha'}$ for a greater $\alpha'>\alpha$. By estimating the increment $|\alpha'-\alpha|\geq \delta$ for a universal constant $\delta>0$, we then obtain that $u\in C^{1,\alpha}$ for all $\alpha<1$.

\vspace{5pt}

The first one is the global $C^{2,\alpha}$ regularity under sharp conditions.

\begin{theorem}\label{thm1}
Assume that $\Omega, \Omega^*$ are two bounded convex domains in $\mathbb{R}^2$.
Assume that $\partial\Omega, \partial\Omega^* \in C^{1,\alpha}$, and $f\in C^\alpha(\overline\Omega)$ is positive, 
for some $\alpha\in(0,1)$. 
Let $u$ be a convex solution to \eqref{MAE1}.
Then we have the estimate
	\begin{equation}\label{main}
		\|u\|_{C^{2,\alpha}(\overline\Omega)} \leq C
	\end{equation}
for the same $\alpha$, where $C>0$ is a constant depending on $f$ and the domains. 
\end{theorem}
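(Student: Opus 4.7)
The plan is to establish the $C^{2,\alpha}$ estimate \eqref{main} in three stages: first, secure an initial H\"older modulus of continuity for $Du$ up to the boundary; next, iterate to improve this H\"older exponent up to any value less than one; finally, upgrade to $C^{2,\alpha}$ via a boundary Schauder-type argument.

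\textbf{Stage 1 (baseline regularity).} The two inputs highlighted in the introduction combine directly. The uniform obliqueness of the gradient map $Du\colon\partial\Om\to\partial\Om^*$ is valid for $C^{1,\alpha}$ convex domains in dimension two by \cite[\S 4.1]{CLW}, and Caffarelli's uniform density of centered sections is valid for general convex domains in dimension two by \cite{C96}. Together these let one normalize a boundary section $S_h(x_0,u)$ by an affine transformation $A_h$ so that $A_h(S_h)$ has bounded eccentricity and the flattened pieces of $\partial\Om,\partial\Om^*$ are small-oscillation graphs over hyperplanes. A Caffarelli-type oscillation-of-normals argument then delivers $u\in C^{1,\alpha_0}(\overline\Om)$ for some $\alpha_0>0$.

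\textbf{Stage 2 (iteration).} The heart of the argument is the step: if $u\in C^{1,\beta}(\overline\Om)$ for some $\beta<1$, then $u\in C^{1,\beta+\delta}(\overline\Om)$ for a universal $\delta>0$, until $\beta+\delta$ reaches $1$. At a boundary point $x_0$, take the normalization $A_h$ of Stage 1, subtract the supporting plane at $x_0$, and divide by $h^{1+\beta}$ to obtain a normalized family $\tilde u_h$. The $C^{1,\beta}$ input together with the uniform obliqueness make $\{\tilde u_h\}$ pre-compact, and any subsequential limit $\tilde u_0$ solves a Monge-Amp\`ere problem on a half-plane with constant right-hand side and flat boundary data. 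Such limits are smooth (by Urbas \cite{Ur} applied to the half-plane model), so $\tilde u_h$ is close to a quadratic polynomial at every sufficiently small scale. Quantifying this closeness using $\partial\Om,\partial\Om^*\in C^{1,\alpha}$ and $f\in C^\alpha(\overline\Om)$ extracts a H\"older improvement by a universal amount $\delta>0$. Iterating finitely many times produces $u\in C^{1,\beta}(\overline\Om)$ for every $\beta<1$.

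\textbf{Stage 3 ($C^{2,\alpha}$ upgrade).} Taking $\beta$ close enough to one that in particular $\beta>\alpha$, the normalized boundary sections are nearly half-disks, $Du$ is nearly constant on each, and the $C^{1,\alpha}$ defect of $\partial\Om,\partial\Om^*$ becomes a genuinely small perturbation of the half-plane model. A perturbation of Caffarelli's boundary $C^{2,\alpha}$ estimate in the form carried out in \cite{C96,CLW} then delivers the bound \eqref{main}.

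The main obstacle will be Stage 2: establishing the universal lower bound $\delta>0$ on the H\"older increment. This requires a delicate blow-up and compactness analysis that exploits both the dimension-two uniform density and the $C^{1,\alpha}$ flatness of $\partial\Om,\partial\Om^*$ to show that the limiting half-plane problem is rigid enough to force a quantitative exponent improvement along the rescaled sequence; it is also where the restriction to two dimensions enters essentially, through the shape control of sections.
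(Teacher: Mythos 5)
Your three-stage skeleton (baseline $C^{1,\alpha_0}$ from uniform density plus uniform obliqueness; iterate to $C^{1,\beta}$ for all $\beta<1$; then a perturbative $C^{2,\alpha}$ upgrade as in \cite{C96,CLW}) matches the paper's architecture, and Stages 1 and 3 are essentially what the paper does. The problem is Stage 2, which you yourself flag as the main obstacle: the mechanism you propose there is not the paper's, and as described it has a genuine gap. First, the rescaled family $\tilde u_h$ obtained by dividing by $h^{1+\beta}$ is \emph{not} pre-compact in any useful sense when $u$ is only known to be $C^{1,\beta}$ with $\beta<1$: the normalizing affine maps $A_h$ can degenerate (the sections $S^c_h$ may be ellipses with semi-axes $a_1\lesssim h^{1/2+\epsilon}$ and $b_1\gtrsim h^{1/2-\epsilon}$, so $\|A_h\|\to\infty$), and controlling exactly this degeneration is the content of the step, not something one can assume to run a compactness argument. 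Second, the appeal to Urbas \cite{Ur} for smoothness of a half-plane limit is not legitimate: that theorem is for bounded, uniformly convex, smooth domains, and a Liouville-type rigidity statement for the second boundary value problem on a half-plane is neither available off the shelf nor supplied by your argument. Third, the "universal amount $\delta>0$" of H\"older improvement is asserted but never derived.

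The paper's Stage 2 works by an entirely different, elementary mechanism that you do not use: the duality between the centred sections of $u$ and of the dual potential $v$ (Corollary \ref{dcoro}, in particular $|x\cdot y|\lesssim h$ for $x\in S^c_h[u]$, $y\in S^c_h[v]$). One first rules out $a_1\lesssim h^{1/2+\epsilon_\beta}$ by locating a boundary point of $\Om$ inside the normalized section using only $\rho\in C^{1,\alpha}$ and the current bound $b_1\gtrsim h^{\beta}$ (the contradiction being that the normalized limit domain would be independent of the $e_1$ direction). The resulting tangential stretching $h^{1/2+\epsilon_\beta}e_1\in S^c_h[u]$ then converts, via the duality, into an improved \emph{normal} height $q_2\gtrsim h^{\beta_1}$ for the dual section, with the explicit gain $\beta-\beta_1>\alpha/4$. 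Iterating while alternating the roles of $u$ and $v$ terminates in finitely many steps. If you want to salvage your compactness route you would need to prove the missing half-plane Liouville theorem and quantify the improvement; otherwise the duality argument is the key idea your proposal is missing.
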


\vspace{5pt}
The second one is the global $W^{2,p}$ regularity.

\begin{theorem}\label{main01}
Assume that $\Omega, \Omega^*$ are two bounded convex domains in $\mathbb{R}^2$.
Assume that $\partial\Omega, \partial\Omega^* \in C^{1,\varepsilon}$ for some $\varepsilon>0$, and $f\in C^0(\bom)$ is positive.
Let $u$ be a convex solution to \eqref{MAE1}.
Then we have the estimate
\beq\label{esti01}
\|u\|_{W^{2,p}(\bom)}\le C
\eeq
for all $p\ge 1$,
where $C$ is a constant depending only $n, p, f, \Om$, and $\Om^*$.
\end{theorem}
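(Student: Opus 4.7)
The plan is to extend the strategy used in \cite{CLW}, where $W^{2,p}$ regularity was established under $C^{1,1}$ boundary assumptions, to the weaker $C^{1,\varepsilon}$ setting. The essential new input is the boundary $C^{1,\alpha}$ regularity of $u$ for every $\alpha<1$, supplied by the iteration argument used for Theorem \ref{thm1}. Starting from the initial $C^{1,\alpha_0}$ bound that follows from the uniform obliqueness \cite[\S4.1]{CLW} and the uniform density \cite{C96}, the iteration improves the exponent by a universal increment $\delta>0$ at each round until $\alpha$ is as close to $1$ as desired. This preliminary step requires only positivity and boundedness of $f$, not its continuity.

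Next, for each boundary point $x_0\in\pom$ and each small $h>0$, I would consider the boundary section $S_h(x_0)=\{u<L_{x_0,h}\}\cap\bom$, with $L_{x_0,h}$ a supporting affine function at $x_0$ of height $h$. Combining the uniform density, the uniform obliqueness, and the $C^{1,\alpha}$ regularity of $u$ and $\pom$, one shows that there is a unimodular affine map $T_h$ with $T_h(x_0)=0$ such that
\[
 c\, E_h^+ \;\subset\; T_h\bigl(S_h(x_0)\bigr) \;\subset\; C\, E_h^+
\]
for a half-ellipsoid $E_h^+$, uniformly in $h$ and in $x_0$. The rescaled potential
\[
 v_h(y) \;=\; h^{-1}\bigl[u(T_h^{-1}y) - L_{x_0,h}(T_h^{-1}y)\bigr]
\]
then solves $\det D^2 v_h = \tilde f_h$ on a normalized domain $\Omega_h$, with $\|\tilde f_h - f(x_0)\|_{L^\infty(\Omega_h)} \to 0$ as $h\to 0$ by the continuity of $f$, and with $\partial \Omega_h$ converging to a hyperplane (quantitatively in $C^{1,\alpha}$) as a consequence of $\pom\in C^{1,\varepsilon}$ combined with the stronger regularity of $u$. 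A Caffarelli-type $W^{2,p}$ estimate adapted to this near-flat configuration then yields $\|v_h\|_{W^{2,p}(K)} \le C(p)$ on compact subsets $K\subset \Omega_h$ up to the flat piece; unscaling and summing the resulting bounds over a dyadic sequence $h_k=2^{-k}$ at each $x_0$, together with Caffarelli's interior $W^{2,p}$ theorem applied away from $\pom$, produces the global estimate \eqref{esti01}.

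The main obstacle is establishing the half-ellipsoid comparability of $S_h(x_0)$ uniformly as $h\to 0^+$ under the weakened assumption $\pom\in C^{1,\varepsilon}$. For $C^{1,1}$ boundaries as in \cite{CLW}, the second-order control of $\pom$ matches the eccentricity scale dictated by the uniform obliqueness, and the normalized sections $T_h(S_h(x_0))$ remain non-degenerate essentially by inspection. Under merely $C^{1,\varepsilon}$ boundaries, $\pom$ may oscillate on small scales in a way that a priori could compete with the flattening of the sections; the improved $C^{1,\alpha}$ regularity of $u$ from the first step is precisely what shows this oscillation to be negligible, preventing the rescaled sections from degenerating. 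Once this comparability is in hand, the remainder of the argument closely follows the $C^{1,1}$ case of \cite{CLW}.
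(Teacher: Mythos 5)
Your proposal follows essentially the same route as the paper: both first run the iteration from the proof of Theorem \ref{thm1} (alternating between $u$ and its dual potential via the conjugacy of centred sections) to upgrade the tangential regularity to $C^{1,\gamma}$ for every $\gamma<1$, thereby controlling the eccentricity of boundary sections, and then feed this non-degeneracy together with the uniform obliqueness into the normalization/perturbation scheme of \cite[\S5]{CLW} to obtain the global $W^{2,p}$ bound. The paper simply cites \cite[\S5]{CLW} for that second stage, which you sketch in somewhat more detail; there is no substantive difference in approach.
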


When $f\equiv1$, very recently Savin and Yu \cite{SY} obtained the global $W^{2,p}$ estimate for 
arbitrary bounded convex domains $\Om, \Om^*\subset \R^2$.
Note that the interior $W^{2,p}$ estimate for the Monge-Amp\`ere equation was proved by Caffarelli \cite {C90}
and the global $W^{2,p}$ estimate for the Dirichlet problem was proved by Savin \cite {Sa}.

\vspace{5pt}
Our third regularity result is the global $W^{2,1+\epsilon}$ regularity.

\begin{theorem}\label{epsilon}
Assume that $\Omega, \Omega^*$ are two bounded convex domains in $\mathbb{R}^2$.
Assume that $\lambda^{-1}<f<\lambda$, for a constant $\lambda>0$.  
Let $u$ be a convex solution to \eqref{MAE1}.
Then we have the estimate
\beq\label{epsest}
\|u\|_{W^{2,1+\epsilon}(\bom)}\le C,
\eeq
where $\epsilon, C>0$ are universal constants depending only $\lambda$ and the domains $\Om, \Om^*$.
\end{theorem}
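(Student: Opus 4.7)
The strategy is to combine three ingredients: (i) the $C^{1,\beta}(\bom)$ regularity for every $\beta\in(0,1)$, provided by the iteration argument developed in this paper from the uniform obliqueness of \cite{CLW} together with Caffarelli's uniform density in dimension two --- this step uses only the two-sided bound $\lambda^{-1}<f<\lambda$, and so is available in the present setting; (ii) the interior $W^{2,1+\epsilon}$ estimate of De~Philippis--Figalli--Savin for Monge-Amp\`ere equations with merely two-sided bounded right-hand side; and (iii) a bounded-overlap covering of a neighborhood of $\pom$ by Monge-Amp\`ere sections of uniformly bounded eccentricity. The first point reduces the problem from boundary regularity per se to a purely Monge-Amp\`ere quantitative question.

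The interior contribution $\int_{\Omega'}|D^2u|^{1+\epsilon}$ on any $\Omega'\Subset\Omega$ is handled directly by (ii). For the boundary contribution I would work with the sections $S_h(x_0)=\{u<\ell_{x_0}+h\}$ at each $x_0\in\pom$, where $\ell_{x_0}$ is the supporting affine function of $u$ at $x_0$. Combining uniform obliqueness (which holds in our $C^{1,\alpha}$-domain setting by \cite[\S4.1]{CLW}), the $C^{1,\beta}$ regularity of $u$ for $\beta$ close to $1$, and Caffarelli's uniform density for planar convex domains, one checks that $S_h(x_0)$ becomes, after an affine normalization $T_{x_0,h}$, comparable to the unit ball, with constants independent of $h\in(0,h_0]$. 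The transformed solution $\tilde u$ still satisfies $\lambda^{-1}<\det D^2\tilde u<\lambda$ on a normalized convex domain.

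On each normalized boundary section I would then run the De~Philippis--Figalli--Savin iteration: at a definite fraction of points one controls $|D^2u|$ by exhibiting a normalized sub-section, and iteration yields the power decay $|\{|D^2u|>M\}\cap S_h(x_0)|\lesssim M^{-(1+\epsilon)}|S_h(x_0)|$. Thanks to uniform obliqueness, sub-sections centered near $\pom$ are again comparable to balls under the same normalization procedure, while sub-sections centered in the interior of $S_h(x_0)$ fall under the genuine interior case. This yields $\int_{S_h(x_0)}|D^2u|^{1+\epsilon}\le C|S_h(x_0)|$. Covering $\{x\in\Om:\dist(x,\pom)<\delta\}$ by finitely overlapping such sections --- available because Monge-Amp\`ere sections satisfy the engulfing property --- and summing then gives the boundary contribution, and combined with the interior bound yields \eqref{epsest}.

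The main obstacle is the third step: one must show that the extraction of ``good'' sub-sections in the De~Philippis--Figalli--Savin iteration proceeds with constants uniform in how close to $\pom$ the sub-section is centered. The decisive input is that uniform obliqueness together with $C^{1,\beta}$ regularity for $\beta$ arbitrarily close to $1$ rules out degenerate sections that are tangent to $\pom$, so the normalizing affine maps $T_{x_0,h}$ have eccentricity bounded independently of the boundary distance. Quantifying this --- essentially showing that, after normalization, boundary sections are indistinguishable from interior ones at every scale --- is the technical heart of the argument.
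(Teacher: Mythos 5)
Your overall strategy (run the De~Philippis--Figalli--Savin scheme up to the boundary via a covering by sections) is the right one and is what the paper does, but two of your three ingredients are not available under the hypotheses of Theorem~\ref{epsilon}, and neither is actually needed. The theorem assumes only that $\Om,\Om^*$ are bounded convex domains in $\R^2$ --- no $C^{1,\alpha}$ regularity of the boundaries. Consequently the uniform obliqueness of Lemma~\ref{lem obli} and the tangential $C^{1,\gamma}$ iteration of Section~3 (both of which require $\pom,\p\Om^*\in C^{1,\alpha}$, since the iteration uses $\rho(x_1)\lesssim x_1^{1+\alpha}$) cannot be invoked; your ingredient (i) and your repeated appeals to obliqueness in steps two and three therefore prove at best a weaker statement. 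The only boundary input the paper uses is the uniform density (Lemma~\ref{LeUni}), which in dimension two holds for \emph{arbitrary} bounded convex domains by Caffarelli's result, together with the resulting duality (Corollary~\ref{dcoro}). From these one gets $|S_h(x)|\approx h$ and the conjugacy of $Du(S_h(x))$ for boundary sections, hence the engulfing property (Lemma~\ref{eng}) and a Vitali covering (Lemma~\ref{vital}); the DFS machinery then runs verbatim.

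The second gap is in what you call the technical heart. It is not true --- and not needed --- that the normalizing affine maps have eccentricity bounded independently of the boundary distance; the eccentricity ${\bf a}(S_h(x_0))=\|A\|^2$ is precisely the quantity that tracks $\|D^2u\|$ and is unbounded (otherwise $D^2u$ would be in $L^\infty$). What is uniform is only the John-normalized shape and the volume $|S_h|\approx h$, which come from uniform density, not from obliqueness. The correct key step, which your sketch only gestures at, is the identity $\int_{S_h(x_0)}\|D^2u\|\le {\bf a}\int_{\p(A(S_h(x_0)))}\bar u_\nu\approx {\bf a}\,h^{n/2}$ obtained by integrating by parts the normalized solution, from which one deduces that on a definite fraction of $S_{\delta h}(x_0)$ one has $\|D^2u\|\approx{\bf a}$; combined with the Vitali covering of $D_k=\{\|D^2u\|\ge N^k\}$ this gives the geometric decay $\int_{D_{k+1}}\|D^2u\|\le(1-\tau)\int_{D_k}\|D^2u\|$, and hence \eqref{epsest}. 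Your alternative plan of proving a per-section bound $\int_{S_h(x_0)}|D^2u|^{1+\epsilon}\le C|S_h(x_0)|$ and summing is not obviously workable, since establishing that bound on a single boundary section is essentially as hard as the global statement.
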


The interior $W^{2,1+\epsilon}$ regularity was obtained by De Philippis, Figalli and Savin in \cite{DFS},
which extends the $W^{2, 1}$ estimate of De Philippis and Figalli \cite {DF}.
Our proof is based on the technique in \cite {DFS} and the uniform density in \cite{C96, CLW}.
By the technique in \cite {DFS} and the uniform density in \cite{CLW}, 
one can also obtain the global $W^{2,1+\epsilon}$ estimate in high dimensions. That is

\begin{thma}
Assume that $\Omega, \Omega^*$ are two bounded convex domains in $\mathbb{R}^n$, $n>2$.
Assume that $\partial\Omega, \partial\Omega^* \in C^{1,1}$, and $\lambda^{-1}<f<\lambda$, for a constant $\lambda>0$.  
Let $u$ be a convex solution to \eqref{MAE1}.
Then we have the estimate
\beq\label{epsestnd}
\|u\|_{W^{2,1+\epsilon}(\bom)}\le C,
\eeq
where $\epsilon, C>0$ are universal constants depending only $n, \lambda$ and the domains $\Om, \Om^*$.
\end{thma}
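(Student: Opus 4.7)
The plan is to combine the interior $W^{2,1+\epsilon}$ machinery of De Philippis--Figalli--Savin \cite{DFS} with the higher-dimensional boundary uniform density and obliqueness estimates from \cite{CLW}, thereby extending the distribution-function decay for $\|D^2u\|$ all the way up to $\pom$. The DFS argument reduces, after a Caffarelli-type normalization, to a quantitative geometric lemma: on a normalized section $S_h(x)$ on which $u$ is sufficiently close to $\tfrac12|y|^2$, the subset where $\|D^2u\|$ exceeds a given threshold occupies only a controlled fraction of $S_h(x)$. An iteration of this dichotomy together with a Vitali-type covering yields $|\{\|D^2u\|>t\}|\lesssim t^{-(1+\epsilon)}$. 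My goal is to run exactly this scheme, with interior sections replaced by boundary sections whenever the base point is near $\pom$.

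First I would invoke the $n$-dimensional obliqueness and uniform density estimates for boundary sections from \cite{CLW} (applicable under the $C^{1,1}$ assumptions on $\pom$ and $\pom^*$). They supply, at every $x_0\in\pom$ and every sufficiently small $h>0$, an affine map $T_h$ with $|\det T_h|^{-1}\sim h^{n/2}$ such that
\beq\label{normalize}
B_c(0)\cap T_h(\bom-x_0)\;\subset\; T_h(S_h(x_0)-x_0)\;\subset\; B_C(0)\cap T_h(\bom-x_0),
\eeq
where $S_h(x_0)$ is the boundary section of $u$ and the normalized domain $T_h(\bom-x_0)$ is, quantitatively, a small $C^{1,1}$-perturbation of a half-ball. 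The Monge-Amp\`ere equation is invariant under the renormalization $u\mapsto h^{-1}u\circ T_h^{-1}$, which carries $\lambda^{-1}\le f\le\lambda$ to an equation of the same type on a fixed half-ball-like domain. Call the objects obtained in this way \emph{good boundary sections}; together with the usual interior sections of Caffarelli, they play the same role that interior sections play in \cite{DFS}.

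With this normalization in hand, I would reproduce the DFS distribution inequality almost verbatim. Fix a large $M$ and set $G_M=\{x\in\bom:\|D^2u(x)\|>M\}$. The essential step is a boundary analogue of the density-decrement lemma of \cite[\S2]{DFS}: if $S$ is a good boundary section of height $h\sim M^{-1}$ and $|G_M\cap S|\ge (1-\delta)|S|$, then $u$ is forced to be very flat on a definite portion of $S$, contradicting $\det D^2u\ge \lambda^{-1}$ via the $C^{1,\alpha}$-quadratic approximation on the half-ball model domain. Hence $|G_M\cap S|\le (1-\delta)|S|$ on every such good section, and the same holds on interior sections by \cite{DFS} directly. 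A Vitali-type covering of $\bom$ by good boundary and interior sections, an iteration of the decrement across dyadic scales in $M$, and summation of the resulting geometric series produce $|G_M|\le CM^{-(1+\epsilon)}$, which is equivalent to \eqref{epsestnd}.

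The main difficulty I expect is the boundary version of the DFS density-decrement lemma. In the interior, Caffarelli's strict convexity theorem and the engulfing property of sections let one conclude that if $\|D^2u\|$ is large on most of a section then the section must split or degenerate, which is impossible. At a boundary point the corresponding picture has to be carried out with respect to the tangent hyperplane to $\pom$ at $x_0$, and in order for the quadratic-approximation step to close one needs the rescaled domains in \eqref{normalize} to converge to a fixed half-space at a uniform rate; this is where the $C^{1,1}$ hypothesis on $\pom$ and $\pom^*$ enters essentially, together with the boundary strict convexity and engulfing consequences of the uniform density in \cite{CLW}. Once this quantitative half-space convergence is in place, the remainder of the DFS scheme transfers mechanically from balls to half-balls.
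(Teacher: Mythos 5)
Your high-level plan --- transplant the De Philippis--Figalli--Savin scheme to boundary sections using the section geometry from \cite{CLW} --- is the same as the paper's, but the engine you put inside it is the wrong one and would not run under the hypotheses of Theorem 1.3\textprime. You describe the key lemma as: on a normalized section on which $u$ is close to $\frac12|y|^2$, the set where $\|D^2u\|$ is large has small density, proved ``via the $C^{1,\alpha}$-quadratic approximation'' and a contradiction with $\det D^2u\ge\lambda^{-1}$. That is Caffarelli's $W^{2,p}$ mechanism, which needs $f$ continuous (or close to a constant) so that normalized solutions are close to paraboloids. Here $f$ is only assumed to satisfy $\lambda^{-1}<f<\lambda$, so no quadratic approximation is available and the lemma as you state it cannot be proved. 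The actual DFS mechanism, reproduced at the boundary in Lemma \ref{decay}, is elementary and never uses closeness to a paraboloid: after normalizing $S_h(x_0)$ one has $|D\bar u|\lesssim h^{1/2}$ on the normalized section (this is where the conjugacy property $ii$), i.e.\ the duality of $S^c_h[u]$ and $S^c_h[v]$ from \cite{CLW}, enters --- not obliqueness, which plays no role in this theorem), hence $\int\|D^2\bar u\|\le\int\Delta\bar u=\int_{\partial}\bar u_\nu\lesssim h^{n/2}$ by the divergence theorem; Chebyshev then gives a set of proportional measure where $\|D^2\bar u\|\le C$, and there the lower bound on the determinant pinches $K^{-1}I\le D^2\bar u\le KI$. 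This yields \eqref{decay1}.

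A second gap: you iterate a density decrement on the measure $|G_M\cap S|$ and claim the geometric series gives $|G_M|\le CM^{-(1+\epsilon)}$. A measure-level decrement $|D_{k+1}|\le(1-\tau)|D_k|$ only yields $|\{\|D^2u\|>t\}|\lesssim t^{-\sigma}$ with $\sigma=\log\frac{1}{1-\tau}/\log N$, which is far below $1$ for large $N$ and does not even give $\|D^2u\|\in L^1$, let alone $L^{1+\epsilon}$. The exponent $1+\epsilon$ comes from running the decay on the energy $\int_{D_k}\|D^2u\|$ rather than on $|D_k|$: Lemma \ref{gdecay} gives $\int_{D_{k+1}}\|D^2u\|\le(1-\tau)\int_{D_k}\|D^2u\|$, whence $|D_{k+1}|\le N^{-k}(1-\tau)^k C$ and $\int\|D^2u\|^{1+\epsilon}\le\sum_kN^{\epsilon(k+1)}\int_{D_k\setminus D_{k+1}}\|D^2u\|<\infty$ for $\epsilon$ small. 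To obtain that decay one covers $D_{k+1}$ by sections chosen so that the normalized size satisfies ${\bf a}(S_{h_x}(x))=KN^k$ (not sections of height $h\sim M^{-1}$ as you propose; the height and the normalized size are unrelated here), applies \eqref{decay1}, and uses the Vitali disjointness coming from the engulfing property, which must itself be verified for boundary sections (Lemmas \ref{eng}--\ref{vital}, again via property $ii$)). Finally, your requirement that the rescaled domains converge to a half-space at a uniform rate is neither needed nor true in this generality; the argument uses only properties $i$)--$iii$) from \cite{CLW}, for which the $C^{1,1}$ hypothesis in $n>2$ is what matters.
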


\vspace{5pt}

This paper is organised as follows:
In \S2, we give some preliminary results on the uniform density and uniform obliqueness properties. 
In \S3, we give the proof of Theorems \ref{thm1} and \ref{main01}, in which the key ingredient is to improve the geometric shape of boundary sub-level sets by an iteration argument. 
In \S4, we prove Theorems \ref{epsilon} and 1.3\textprime.

\vskip5pt

\section{Preliminary}

\subsection{Uniform density}

Let $u$ be a convex solution to \eqref{MAE1}. 
We extend $u$ to a global convex function as follows, 
	\begin{equation*}
		\tilde u(x) := \sup\{\ell(x) : \ell \mbox{ is affine, } \ell \leq u \mbox{ in }\Omega, \nabla\ell\in\Omega^*\}, \quad\mbox{for }x\in\mathbb{R}^2.
	\end{equation*}
For simplicity, we still write the extended function $\tilde u$ as $u$. 
Given a point $x_0\in\overline\Omega$, we denote the ``centred" sub-level set of $u$ at $x_0$ of height $h$ by
	\begin{equation*}
		S_h^c[u](x_0) = \left\{x\in\mathbb{R}^2 : u(x) < \hat\ell(x) + h \right\},
	\end{equation*}
where $\hat\ell$ is an affine function such that $\hat\ell(x_0)=u(x_0)$, and $x_0$ is the mass centre of $S_h^c[u](x_0)$. 	
We also write $S_h^c[u](x_0)$ as $S_h^c[u]$ or $S_h^c(x_0)$ when no confusion arises. 

\begin{lemma}\label{LeUni}
Assume that $\Omega, \Omega^*\subset\R^2$ are bounded convex domains and $0\in\partial\Omega$.
Then there is a positive constant $\delta_0$, independent of $u$ and $h$, such that
	\begin{equation}\label{unid}
		\frac{\Vol\left(\Omega\cap S_h^c(0)\right)}{\Vol\left(S_h^c(0)\right)} \geq \delta_0.
	\end{equation}
\end{lemma}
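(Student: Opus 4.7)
The lemma is the uniform density property for general convex domains in dimension two, proved by Caffarelli in \cite{C96}. I would follow his argument, which combines affine normalization of the section $S_h^c(0)$, the Aleksandrov maximum principle, and the observation that the extended potential $\tilde u$ has no Monge--Amp\`ere mass outside $\Om$.

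The first step is to normalize. Since $S_h^c(0)$ is a bounded convex set with centroid $0$, one picks an affine map $T$ with $T(0)=0$ and $B_1\subset K':=T(S_h^c(0))\subset B_{C_0}$ for a universal $C_0$ (this is available in $\R^2$ via the centroid-centred L\"owner--John ellipsoid). Define the rescaled potential
\[
v(y):=h^{-1}\bigl(\tilde u(T^{-1}y)-\hat\ell(T^{-1}y)-h\bigr),
\]
which is convex on $K'$, vanishes on $\partial K'$, and satisfies $v(0)=-1$. Applying Aleksandrov's maximum principle on the ball-like domain $K'$ yields a universal lower bound $\mu_v(K')\ge c_1>0$.

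Next I would exploit the vanishing of the exterior Monge--Amp\`ere mass. Since $\partial\tilde u\subset\overline{\Om^*}$ everywhere and the transport condition $Du(\Om)=\Om^*$ already assigns the full mass $|\Om^*|$ to $\Om$, the envelope carries no further mass: $\mu_{\tilde u}(\Om^c)=0$. Rescaling through $T$, $\mu_v(K'\setminus T(\Om))=0$, so the lower bound of the previous step concentrates entirely on $K'\cap T(\Om)$:
\[
c_1\;\le\;\mu_v\bigl(K'\cap T(\Om)\bigr)\;\le\;\|f\|_\infty\,|\det T|^{-2}h^{-2}\,\bigl|K'\cap T(\Om)\bigr|.
\]
A matching two-sided control of the quantity $h^2|\det T|^2$---obtained by applying Aleksandrov also in the opposite direction, using $\min f>0$ together with the gradient bound $Du\subset\Om^*$---then gives $|K'\cap T(\Om)|\ge c_2$ universal. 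Dividing by $|K'|\le\pi C_0^2$ yields \eqref{unid}.

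The main obstacle is justifying $\mu_{\tilde u}(\Om^c)=0$ rigorously when $u$ is not known to be smooth. Heuristically the subdifferential of the affine envelope on $\Om^c$ parametrises only the one-dimensional boundary $\partial\Om^*$, so its Lebesgue area is zero; to make this precise one invokes the measure-preserving nature of the optimal transport map $Du$ together with the sup-envelope definition of $\tilde u$, and checks that the overlap of subdifferentials across $\partial\Om$ is negligible. Once that step is in place, the remainder reduces to standard two-dimensional applications of Aleksandrov's inequality and John's lemma.
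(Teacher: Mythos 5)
First, a remark on the comparison itself: the paper does not actually prove Lemma \ref{LeUni}; it quotes Caffarelli \cite{C96} (noting that every planar convex domain is polynomially convex, so his hypothesis is automatic in $\R^2$) and the alternative proof in \cite{CLW}. So your sketch must be judged against Caffarelli's argument, which you are attempting to reconstruct.

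Your steps up to and including the concentration of mass on $K'\cap T(\Om)$ are sound: the centroid-centred John normalization, the Aleksandrov bound $\mu_v(K')\ge c_1$, and the vanishing of the exterior Monge--Amp\`ere mass $\mu_{\tilde u}(\R^2\setminus\Om)=0$ (which does follow from $\partial\tilde u(\R^2)\subset\overline{\Om^*}$, $\mu_{\tilde u}(\Om)=\int_\Om f=|\Om^*|$, and the fact that slopes attained at two distinct points form a Lebesgue-null set). The genuine gap is the last step. What you need is $h^2|\det T|^2\ge c$, which, since $|\det T|\approx|S_h^c(0)|^{-1}$, is exactly the upper volume bound $|S_h^c(0)|\le Ch$ --- and that bound is essentially equivalent to the lemma itself. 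The ``opposite direction'' Aleksandrov/comparison argument you invoke (comparing $v$ with the solution of $\det D^2w=\lambda'$ in $K'$, $w=0$ on $\partial K'$) requires the mass lower bound $\det D^2v\ge\lambda'>0$ on \emph{all} of $K'$; by the very fact you established in the previous step, the mass of $v$ vanishes on $K'\setminus T(\Om)$, so this comparison is unavailable precisely when the section sticks far out of $\Om$, i.e.\ precisely in the scenario you must exclude. The gradient bound $Du\in\Om^*$ only gives $\mu_{\tilde u}(S_h^c)\le|\Om^*|$, which says nothing for small $h$. To bound $|S_h^c|$ from above you would need to know that a definite fraction of $S_h^c$ lies in $\Om$, where the lower bound $f\ge\lambda$ is in force --- which is the conclusion of the lemma. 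This is where Caffarelli's real work lies: one argues by contradiction on a long, thin, eccentric section, and uses the duality between $S_h^c[u]$ and $S_h^c[v]$ (as in Corollary \ref{dcoro}) together with the polynomial convexity of $\partial\Om$ at $0$ (automatic in the plane) to reach a contradiction. Without supplying that geometric step, your argument is circular and does not close.
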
	

The uniform density \eqref{unid} was obtained by Caffarelli in \cite{C96} assuming that $\partial\Omega$ is polynomially convex at $0$. 
As remarked in \cite{C96} that in dimension two, any bounded convex domain is polynomially convex, therefore the uniform density \eqref{unid} is a free gift from \cite{C96}. 
Alternatively, in \cite{CLW} we provide a different proof in any dimension, relaxing the polynomial convexity to the convexity of domains $\Omega, \Omega^*$ with $C^{1,\gamma}$ boundaries for some $\gamma\leq1$.

As a corollary of Lemma \ref{LeUni}, we have the following duality property that
\begin{corollary}\label{dcoro}
Let $v$ be the dual potential function on $\Om^*$, then 
\begin{itemize}
\item[$(i)$] $S^c_h[u]$ is conjugate to $S^c_h[v]$, namely if $A(S^c_h[u])\approx B_{\sqrt{h}}$ by an affine transform $A$,
then $A'^{-1}(S_h^c[v])\approx B_{\sqrt{h}},$ where $A'$ is the transpose of $A$. 

\item[$(ii)$] $\forall$ $x\in S^c_h[u],$ $y\in S^c_h[v]$, we have 
$$|x\cdot y| \lesssim h.$$
\end{itemize}
\end{corollary}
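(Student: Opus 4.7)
The plan is to reduce to a normalized picture and then combine Young--Fenchel with a John-type planar estimate. After translating in $x$ and $y$, subtracting affine functions from $u$ and $v$, and adding constants, we may assume $x_0=y_0=0$, $u(0)=v(0)=0$, and that the supporting affine functions vanish, so that $S^c_h[u]=\{u<h\}$ and $S^c_h[v]=\{v<h\}$ are both centred at the origin. Under Legendre duality $v(y)=\sup_x(x\cdot y-u(x))$ these operations are compatible; moreover, an affine normalising matrix $A$ of $S^c_h[u]$ is transported to its inverse transpose $A'^{-1}$ on the dual side, which is the general covariance rule for the Legendre transform under a linear change of variables in $x$, and is precisely the reason statement (i) takes the stated form.

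For (ii), the Young--Fenchel inequality $u(x)+v(y)\ge x\cdot y$ immediately gives $x\cdot y<2h$ for any $x\in S^c_h[u]$ and $y\in S^c_h[v]$. Since $0$ is the centroid of the planar convex body $S^c_h[v]$, the elementary centroid inequality $-\tfrac{1}{2}K\subset K$ (valid in $\R^2$) yields $-y/2\in S^c_h[v]$; feeding $(x,-y/2)$ into Young--Fenchel gives $-x\cdot y/2<2h$, hence $x\cdot y>-4h$, and altogether $|x\cdot y|\le 4h$.

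For (i), let $B_{c\sqrt h}\subset A(S^c_h[u])\subset B_{C\sqrt h}$. In dimension two one has $|S^c_h[u]|\sim h\sim|S^c_h[v]|$, a consequence of $\det D^2u\sim 1$, the uniform density of Lemma \ref{LeUni}, and Alexandrov's estimate at the centroid. Comparing $|A(S^c_h[u])|=|\det A|\,|S^c_h[u]|$ with the ball inclusions forces $|\det A|\sim 1$, so $|A'^{-1}(S^c_h[v])|\sim h$. For the outer bound, use the identity $\langle A'^{-1}y, Ax\rangle=\langle y,x\rangle$: given $y\in S^c_h[v]$, pick $x\in S^c_h[u]$ with $Ax=c\sqrt h\,(A'^{-1}y)/|A'^{-1}y|$, available because $B_{c\sqrt h}\subset A(S^c_h[u])$; (ii) then gives $c\sqrt h\,|A'^{-1}y|\le 4h$, so $A'^{-1}(S^c_h[v])\subset B_{C'\sqrt h}$. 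For the inner bound, $A'^{-1}(S^c_h[v])$ is a convex body of area $\sim h$, contained in $B_{C'\sqrt h}$, with centroid still at $0$ by affine invariance; John's lemma yields a maximal inscribed ellipsoid $E\subset A'^{-1}(S^c_h[v])$ centred at $0$ with $|E|\sim h$, and the enclosing ball then forces both semiaxes of $E$ to be $\sim\sqrt h$, giving $B_{c'\sqrt h}\subset A'^{-1}(S^c_h[v])$.

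The main obstacle is bookkeeping around centroids: the Legendre transform does not in general send centre to centre, so one must verify that the pair $(x_0,y_0)$ and the slopes chosen in the two supporting affine functions are the ones which make the normalisation self-consistent, i.e.\ that centring $S^c_h[u]$ at $x_0$ matches up (after the affine reductions above) with centring $S^c_h[v]$ at the paired point $y_0$. Once this is pinned down, the rest is short: Young--Fenchel gives (ii) immediately, and the volume identity together with the centroid version of John's lemma for planar convex bodies yields (i). The only other technical point is the volume bound $|S^c_h[u]|\sim h$ at a boundary point, which genuinely requires Lemma \ref{LeUni} in order to prevent the section from being arbitrarily pinched and to keep the centroid at distance $\sim\diam(S^c_h)$ from the boundary in Alexandrov's estimate.
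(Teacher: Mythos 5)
The paper gives no proof of this corollary at all: it is stated as an immediate consequence of Lemma \ref{LeUni} and the section--duality machinery of \cite{C96} and \cite[\S 2]{CLW}, so there is nothing internal to compare against. Your proposal is a reasonable self-contained reconstruction, and its skeleton is correct: Young--Fenchel plus the planar centroid inclusion $-\tfrac12 K\subset K$ for (ii); and for (i), the volume estimate $|S^c_h[u]|\sim h\sim|S^c_h[v]|$ (which, as you correctly note, is exactly where Lemma \ref{LeUni} enters at boundary points), the identity $\langle Ax, A'^{-1}y\rangle = \langle x,y\rangle$ combined with (ii) for the outer ball, and the two-dimensional ``area $\sim h$ inside $B_{C\sqrt h}$ with centroid at $0$ forces $B_{c\sqrt h}$ inside'' argument for the inner ball. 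All of these steps are sound.

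The one genuine gap is the item you yourself flag and then defer: the compatibility of the centred normalisation with Legendre duality is not mere bookkeeping, it is the actual content of (ii). The affine function $\hat\ell$ in the definition of $S^c_h[u](x_0)$ is \emph{not} a supporting function at $x_0$ (its slope is chosen to make $x_0$ the centroid), so after subtracting it you do not get a nonnegative function, and its Legendre transform is $v(\cdot + D\hat\ell)$ up to affine terms, not the centred normalisation of $v$ at $y_0$. To close this you need three quantitative facts, all consequences of the centroid property of centred sections: (a) the dual base point is $y_0\in\partial u(x_0)$, so that the two sections are centred at paired points; (b) $\min_{S^c_h[u]}(u-\hat\ell)\ge -Ch$ (a chord through the centroid is divided by it in ratio at most $n:1$, so a deep negative minimum would contradict $u-\hat\ell<h$ on the section and $=0$ at $x_0$); and (c) $|D\hat\ell\cdot(x-x_0)|\le Ch$ for all $x\in S^c_h[u]$ (same reflection argument), so the slope correction perturbs $x\cdot y$ only by $O(h)$. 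With (a)--(c) in hand your Young--Fenchel computation goes through with the implicit constants enlarged, and the rest of your argument for (i) is complete. As written, the proof asserts this compatibility rather than proving it.
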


\subsection{Obliqueness}

\begin{lemma}\label{lem obli}
Assume that $\Omega, \Omega^*\subset\R^2$ are two convex domains {with $C^{1,\alpha}$ boundaries for some $\alpha>0$,}
and $f$ is positive and continuous.
Let $0\in\partial\Omega$ and the image $Du(0)=0\in\partial\Omega^*$. 
Then there exists a positive constant $\mu$ such that 
	\begin{equation}\label{sobli}
		\langle \nu(0), \nu^*(Du(0)) \rangle \geq\mu>0,
	\end{equation}
where $\nu$ and $\nu^*$ are the  unit inner normal of $\Om$ and $\Om^*$, respectively.
\end{lemma}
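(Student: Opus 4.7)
The proof proceeds by contradiction. Suppose there are data $(\Omega_k, \Omega_k^*, f_k)$ satisfying the hypotheses uniformly and convex solutions $u_k$ of \eqref{MAE1} for which $\mu_k := \langle \nu_k(0), \nu_k^*(0)\rangle \to 0$. After a rotation of coordinates I may assume $\nu_k(0) = e_2$, so $\partial\Omega_k$ is locally the graph $x_2 = \phi_k(x_1)$ with $\phi_k(0) = \phi_k'(0) = 0$ and $\|\phi_k\|_{C^{1,\alpha}}$ uniformly bounded. The failure of obliqueness means $\nu_k^*(0) \to \pm e_1$; without loss of generality $\nu_k^*(0) \to + e_1$, so $\partial\Omega_k^*$ is locally a graph $y_1 = \phi_k^*(y_2)$ with $\phi_k^*(0) = 0$ and $(\phi_k^*)'(0) \to 0$.

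For each $k$ and a small scale $h$, normalize the centred section $S_h^c[u_k](0)$ by a unimodular affine map $A_k$, with SVD $A_k = U_k \Sigma_k V_k^T$, so that $B_{\sqrt h} \subset A_k(S_h^c[u_k](0)) \subset C_0 B_{\sqrt h}$ (John's lemma in two dimensions). Then $S_h^c[u_k](0)$ is an ellipse with semi-axes $\sqrt h/\sigma_j$ along the columns of $V_k$, and by Corollary~\ref{dcoro}(i) the dual section $S_h^c[v_k](0)$ is an ellipse with semi-axes $\sqrt h\sigma_j$ along the \emph{same} columns of $V_k$. The uniform density Lemma~\ref{LeUni} applied on the $\Omega_k$ side, together with the $C^{1,\alpha}$ graph bound $|\phi_k(x_1)| \le C|x_1|^{1+\alpha}$ and the centroid property, forces the long axis of $S_h^c[u_k](0)$ to lie nearly along the tangent direction $e_1$ of $\partial\Omega_k$ at $0$, with an elongation ratio bounded by $h^{-\alpha/(2(2+\alpha))}$. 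The analogous density on the $\Omega_k^*$ side forces the long axis of $S_h^c[v_k](0)$ to lie nearly along $e_2$, the tangent of $\partial\Omega_k^*$ at $0$. By the ellipse duality (shared principal axes, reciprocal semi-axes), these two constraints are consistent only when the principal directions of $A_k$ converge to $e_1$ and $e_2$.

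Now rescale: set $\tilde u_k(y) := h^{-1} u_k(A_k^{-1} y)$ on $\tilde\Omega_k := A_k(\Omega_k)$. Then $\tilde u_k$ solves a non-degenerate Monge-Amp\`ere equation $\det D^2 \tilde u_k = \tilde f_k$ with $\tilde f_k$ uniformly bounded above and below, on a domain whose boundary near the origin is a $C^{1,\alpha}$ graph of bounded norm, tangent to $e_1$ at $0$; and $D\tilde u_k$ maps into $A_k^{-T}\Omega_k^*$, whose boundary near $0$ is a $C^{1,\alpha}$ graph tangent to $e_2$ at $0$ (because $(\phi_k^*)'(0) \to 0$). Using compactness together with boundary $C^{1,\beta}$ estimates, a subsequence converges to a limit $\tilde u_\infty$ with these properties. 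Differentiating the boundary relation $D\tilde u_\infty(\partial\tilde\Omega_\infty) \subset \partial\tilde\Omega_\infty^*$ tangentially at the origin, with both boundary slopes vanishing, yields (in a suitable weak sense) $\tilde u_{\infty,11}(0) = 0$; together with $\det D^2 \tilde u_\infty(0) = \tilde f_\infty(0) > 0$ this forces $-\tilde u_{\infty,12}(0)^2 > 0$, a contradiction.

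The main obstacle is making the limiting identity $\tilde u_{\infty,11}(0) = 0$ rigorous without a priori $C^2$ boundary regularity: one must interpret the tangential derivative of the boundary relation weakly, using the boundary $C^{1,\beta}$ estimate together with the duality and density information on the sub-level sets (Lemma~\ref{LeUni} and Corollary~\ref{dcoro}). This is the technical core of the argument and is carried out in detail in \cite[\S4.1]{CLW}.
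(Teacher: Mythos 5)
The paper gives no self-contained proof of this lemma: it simply records that the two-dimensional uniform obliqueness is established in \cite[\S4.1]{CLW}, and the argument there is purely geometric --- it plays the sub-level sections $S_h^c[u](0)$ and $S_h^c[v](0)$ against each other using the duality $|x\cdot y|\lesssim h$ of Corollary \ref{dcoro} and the uniform density of Lemma \ref{LeUni}, and never touches second derivatives of $u$. Your proposal instead builds toward the classical Delano\"e--Urbas contradiction: differentiate the boundary relation $u_1=\phi^*(u_2)$ tangentially to get $\tilde u_{\infty,11}(0)=0$ and contradict $\det D^2\tilde u_\infty(0)>0$. That final step is the genuine gap, and it is not a removable technicality: it presupposes that the (limit of the) solution has second derivatives at the boundary point, but under the standing hypotheses (merely convex domains with $C^{1,\alpha}$ boundaries) second-order boundary regularity is exactly what obliqueness is needed to prove, so the argument is circular. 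You acknowledge the gap and defer it to \cite[\S4.1]{CLW}, but that reference does not supply a weak form of the identity $u_{11}(0)=0$; it proves obliqueness by an entirely different mechanism, so the citation does not patch the hole in the argument you actually set up.

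There are also unjustified assertions earlier in the sketch that would need work even if the endgame were sound: the quantitative elongation bound $h^{-\alpha/(2(2+\alpha))}$ for the long axis of $S_h^c[u_k](0)$ is stated without derivation; and after the anisotropic normalization $A_k$ (whose eccentricity is not a priori bounded as $h\to0$ or $k\to\infty$), the rescaled domains $A_k(\Omega_k)$ and $A_k'^{-1}(\Omega_k^*)$ need not have boundaries that are $C^{1,\alpha}$ graphs with uniformly bounded norms, nor need $\tilde f_k$ remain continuous in the limit, so the compactness step producing $\tilde u_\infty$ with the stated properties is not established. If you want a correct proof along the lines actually used here, the route is the one in \cite[\S4.1]{CLW}: assume the normals degenerate, use the uniform density on both sides together with the convexity and $C^{1,\alpha}$ graph bounds to show that $S_h^c[u](0)$ and $S_h^c[v](0)$ must be elongated in directions that violate the duality constraint $|x\cdot y|\lesssim h$ for small $h$ --- no blow-up limit and no second derivatives required.
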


The uniform obliqueness  \eqref{sobli} in dimension two was proved in \cite[\S4.1]{CLW}.
More generally, in \cite[\S4.1]{CLW} it merely assumes that locally $\pom=\{x_2=\rho(x_1)\}$ for a convex function $\rho\geq0$, and there exists an even, continuous function $\sigma$ satisfying $\sigma(t)=o(|t|)$ as $t\to0$ and monotone increasing for $t>0$, such that 
\beqs
\rho(t) \leq \sigma(t)\qquad\mbox{for } t\leq0.
\eeqs
Typically by choosing $\rho(t)=|t|^{1+\alpha}$ for some $\alpha>0$, we then obtain \eqref{sobli}.

\begin{remark}
The uniform obliqueness was proved by Caffarelli \cite{C96} for uniformly convex domains with $C^2$ boundary, 
and by Delano\"e \cite{De} and Urbas \cite {Ur} for uniformly convex domains with $C^{3,1}$ boundary. 
\end{remark}

\vspace{5pt}

\section{Proof of Theorems \ref{thm1} and \ref{main01}}

By the uniform obliqueness \eqref{sobli}, we may assume that locally near the origin, 
$$\Omega=\{x_2\geq \rho(x_1)\},\qquad \Omega^*=\{x_2\geq \rho^*(x_1)\},$$ 
where $\rho, \rho^*\in C^{1,\alpha}$, for some $\alpha>0$, are convex functions satisfying $\rho(0)=\rho^*(0)=0$ and $\rho'(0)=(\rho^*)'(0)=0$.

By John's lemma, we write 
$$S^c_h[u]\approx E_1:=\{x\in\R^2 : \frac{(x_1-k_1x_2)^2}{a_1^2}+\frac{x_2^2}{b_1^2}=1\},$$
and
$$S^c_h[v] \approx E_2:=\{y\in\R^2 : \frac{(y_1-k_2y_2)^2}{a_2^2}+\frac{y_2^2}{b_2^2}=1\}.$$

By global $C^{1,\delta}$ regularity of $u, v$, we have
	\begin{equation}
		b_1, b_2\gtrsim h^{\frac{1}{1+\delta}},\quad\mbox{ for some } \delta\in(0,1).
	\end{equation} 
Denote $\beta:=\frac{1}{1+\delta},$ we have $\beta\in (\frac{1}{2}, 1).$ 
Letting $p$ (resp. $q$) be the intersection of the positive $e_2$-axis and $\p E_1$ (resp. $\p E_2$),
we actually have $p_2, q_2\gtrsim h^\beta.$ 

Note that by the uniform density property, we have 
	\[ |S^c_h[u]|\approx |S^c_h[v]| \approx h, \]
 where $|E|=\Vol(E)$ denotes the Lebesgue measure of the set $E$.

\begin{proof}[Proof of Theorems]
We shall prove Theorems \ref{thm1} and \ref{main01} simultaneously. 
The proof is divided into three steps.

\noindent{\bf Step 1:} 
We show that $u$ is $C^{1,\gamma}$ tangentially for some $\gamma$ that can be computed explicitly.
Recall that we already knew $u\in C^{1,\delta}$ for some $\delta\in(0,1)$, thus tangentially 
	\[ a_1 \gtrsim h^{\frac{1}{1+\delta}} = h^\beta. \]
We shall improve $\delta$ to a bigger $\gamma>\delta$.

Indeed, suppose to the contrary that $a_1 \lesssim h^{\frac{1}{1+\gamma}}$, namely 
	\begin{equation}
		a_1\lesssim h^{\frac{1}{2}+\epsilon},\quad\mbox{ for  $\epsilon$ satisfying }\gamma=\frac{1-2\epsilon}{1+2\epsilon}.
	\end{equation}
Then, since $|S^c_h[u]|\approx h\approx a_1b_1,$ we have 
	\begin{equation}
		b_1\gtrsim h^{\frac{1}{2}-\epsilon.}
	\end{equation}

Take a point $x\in\partial\Omega$ such that $x_1=h^{\frac{1}{2}+\frac{1}{1+\eta}\epsilon}$, where $\eta>0$ is a small constant to be determined later.
 Note that since $\rho\in C^{1,\alpha}$, we have
$$x_2=\rho(x_1)\lesssim h^{(\frac{1}{2}+\frac{1}{1+\eta}\epsilon)(1+\alpha)}.$$

Denote $\tilde{x}=Tx,$ where $T$ is the affine transformation normalising $S^c_h[u],$ namely
\[  \tilde{x}_1=\frac{x_1-k_1x_2}{a_1},\qquad \tilde{x}_2=\frac{x_2}{b_1}\]
such that $TE_1=B_1.$
We \emph{claim} that $x_1-k_1x_2\approx x_1$ (that implies $\tilde{x}_1\rightarrow \infty$ as $h\rightarrow 0$).
Indeed, $k_1\leq \frac{a_1}{p_2}\lesssim h^{\frac{1}{2}+\epsilon-\beta}.$  Hence, $$k_1x_2\lesssim h^{\frac{1}{2}+\epsilon-\beta+(\frac{1}{2}+\frac{1}{1+\eta}\epsilon)(1+\alpha)}.$$
In order for $k_1x_2$ to be a lower order term comparing to $x_1,$ we only need
$$\frac{1}{2}+\epsilon-\beta+(\frac{1}{2}+\frac{1}{1+\eta}\epsilon)(1+\alpha)>\frac{1}{2}+\frac{1}{1+\eta}\epsilon,$$
or equivalently 
\[ \frac{1+\eta+\alpha}{1+\eta}\epsilon > \beta - \frac12(1+\alpha).  \]
It suffices to have $\frac{1}{1+\eta}\epsilon-\beta+\frac{1}{2}(1+\alpha)\geq 0.$ Therefore, we can take 
$$\epsilon=\epsilon_\beta=(\eta+1)(\beta-\frac{1}{2}-\frac{\alpha}{2}).$$
Note that if $\beta\leq \frac{1}{2}+\frac{\alpha}{2},$ we can take $\epsilon$ as any positive small constant.
On the other hand it is straightforward to compute that 
$$\tilde{x}_2=\frac{x_2}{b_1}\lesssim h^{(\frac{1}{2}+\frac{1}{1+\eta}\epsilon)(1+\alpha)-\frac{1}{2}+\epsilon}\rightarrow 0$$
as $h\rightarrow 0.$
Hence, the above estimates and convexity imply that as $h\to0$, the limit domain is independent of $e_1$ direction. Therefore, as long as $\epsilon_\beta>0,$ we obtain that $u$ is
tangentially $C^{1,\gamma}$ with $\gamma=\frac{1-2\epsilon_\beta}{1+2\epsilon_\beta}$, by using the argument in \cite{C96, CLW}.

It means that initially given $u\in C^{1,\delta}$, or equivalently, given a $\beta = \frac{1}{1+\delta}$, we can obtain that $u\in C^{1,\gamma}$ tangentially with $\gamma=\frac{1-2\epsilon_\beta}{1+2\epsilon_\beta}$, or equivalently, improve $\beta$ to a better (smaller) $\beta_1$ with
\[ \beta_1 = \frac{1}{1+\gamma} = \frac12+\epsilon_\beta. \]

\noindent{\bf Step 2.} From the previous step, $h^{\frac{1}{2}+\epsilon_\beta}e_1\in S^c_h[u].$
By Corollary \ref{dcoro}, $S^c_h[v]$ is conjugate to $S^c_h[u]$ and  
$$\left|y\cdot (h^{\frac{1}{2}+\epsilon_\beta}e_1) \right|\lesssim h,\qquad \forall\, y\in S^c_h[v].$$
 It implies that the length of the projection of  $S^c_h[v]$ to $e_1$-axis is bounded by 
$h^{\frac{1}{2}-\epsilon_\beta}.$ Since $|E_2|\approx h,$ we have that $q_2\geq h^{\beta_1},$ where $\beta_1=\frac{1}{2}+\epsilon_\beta.$ 

Recall that originally we merely
have $q_2\gtrsim h^\beta.$ 
Now we \emph{claim} that $\beta-\beta_1\geq c_\alpha$ for some constant $c_\alpha$ depending only on $\alpha.$
Indeed $$\beta-\beta_1=\beta-\frac{1}{2}-(\eta+1)(\beta-\frac{1}{2}-\frac{\alpha}{2})=\frac{\alpha}{2}+\frac{\eta}{2}+\frac{\alpha}{2}\eta-\eta\beta>\frac{\alpha}{4}$$
provided $\eta$ is sufficiently small.

\noindent{\bf Step 3.}
Now, we can repeat {\bf Step 1} for $v$ by replacing $\beta$ by $\beta_1,$ namely using $q_2\geq h^{\beta_1}$, 
Then by {\bf Step 2}, $\beta_1$ is improved to $\beta_2,$ namely in turn, we obtain $p_2\geq h^{\beta_2}$ and
$\beta_1-\beta_2>\frac{\alpha}{4}.$ 
After finite steps, we have $\epsilon_{\beta_m}<0$ for some finite number $m\in\ZZ$. 
This implies that $u, v$ are tangentially $C^{1,\gamma}$ for any $\gamma\in (0,1).$

Once having the tangential $C^{1,\gamma}$ regularity for any $\gamma\in (0,1)$ and the uniform obliqueness \eqref{sobli}, we can use the same argument in \cite[\S5]{CLW} to obtain the global $C^{2,\alpha}$ and $W^{2,p}$ regularities, respectively.
\end{proof}

\vspace{5pt}

\section{Proof of Theorems \ref{epsilon} and 1.3\textprime}
In this section, we prove the global $W^{2,1+\epsilon}$ regularities, by using the geometric properties of boundary sub-level sets established in \cite{CLW}, and the argument of De Philippis-Figalli \cite{DF}, De Philippis-Figalli-Savin \cite{DFS} for the interior $W^{2,1}$, $W^{2,1+\epsilon}$ estimates for Monge-Amp\`ere equations.

For any $x\in \overline{\Omega}\subset\R^n,$ denote the sub-level set 
$$S_h(x)=\{z\in \overline{\Omega} : \ u(z)<u(x)+Du(x)\cdot (z-x)+h\}.$$
Recall the following properties established in \cite[\S2]{CLW}:

\begin{itemize}
\item[$i$)] $S_{h/C}^c(x)\cap \overline{\Omega}\subset S_h(x)\subset S_{Ch}^c(x)\cap \overline{\Omega}$ for a universal constant $C>0$; 

\item[$ii$)] $Du(S^c_h(x))$ is conjugate to $S^c_h(x),$ namely, let $A$ be an affine transformation such that
$A\left(S^c_h(x)\right)\approx B_{h^{\frac{1}{2}}}$, then $A'^{-1} \left(Du(S^c_h(x))\right)\approx B_{h^{\frac{1}{2}}}$, where $A'$ is the transpose of $A$;

\item[$iii$)] $|S_h(x)|\approx |S_h^c(x)|\approx h^{\frac{n}{2}}.$
\end{itemize}

\begin{remark}
Note that the properties $i$)-$iii$) hold in $\R^2$ for any convex domain, see Corollary \ref{dcoro}.
In $\R^n$, $n>2$, we assume the domains are $C^{1,1}$ and convex.
\end{remark}

Caffarelli and Gutierrez \cite{CG} showed that the sub-level sets $S_h(x)\Subset \Omega$ have engulfing properties similar to those of balls. 
In the following lemma, we show that those engulfing properties also hold for the sub-level sets
not necessarily compactly contained in $\Omega$.

\begin{lemma}[Engulfing property]\label{eng}
There exist universal constants $r_0>0$ and $C>1$ such that
for any $x_1\in \overline{\Omega}$, $x_2\in S_h(x_1)$ and $h\leq r_0$, we have $S_h(x_1)\subset S_{Ch}(x_2).$
\end{lemma}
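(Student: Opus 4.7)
The plan is to run the classical Caffarelli-Gutierrez engulfing computation but replace the interior John normalization with the boundary conjugacy of property $ii$), equivalently Corollary \ref{dcoro}. Everything reduces to one algebraic identity plus a single bound on a cross term.

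Given $x_1\in\overline\Omega$, $x_2\in S_h(x_1)$, and an arbitrary $z\in S_h(x_1)$, I would write
\begin{align*}
u(z)-u(x_2)-Du(x_2)\cdot(z-x_2)
&= \bigl[u(z)-u(x_1)-Du(x_1)\cdot(z-x_1)\bigr] \\
&\quad - \bigl[u(x_2)-u(x_1)-Du(x_1)\cdot(x_2-x_1)\bigr] \\
&\quad + \bigl(Du(x_1)-Du(x_2)\bigr)\cdot(z-x_2).
\end{align*}
By convexity of $u$ and the definition of $S_h(x_1)$, each of the first two brackets lies in $[0,h)$, so together they contribute at most $h$. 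Consequently the inclusion $S_h(x_1)\subset S_{Ch}(x_2)$ is equivalent to showing the cross term $(Du(x_1)-Du(x_2))\cdot(z-x_2)$ is $O(h)$.

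For the cross term I would invoke property $i$) to place both $z$ and $x_2$ inside $S_{Ch}^c(x_1)$, then apply property $ii$) twice. Let $A$ be the affine map normalizing $S_{Ch}^c(x_1)$ to a ball of radius $\sqrt h$; then $|A(z-x_2)|\lesssim\sqrt h$. By property $ii$), $(A')^{-1}\bigl(Du(S_{Ch}^c(x_1))\bigr)$ also lies in a ball of radius $\sqrt h$, so since $Du(x_1)$ and $Du(x_2)$ both belong to $Du(S_{Ch}^c(x_1))$, their images under $(A')^{-1}$ differ by at most $\sqrt h$. The adjoint identity $p\cdot v=(A')^{-1}p\cdot Av$ together with Cauchy-Schwarz then delivers the desired bound $\lesssim h$, hence $z\in S_{Ch}(x_2)$.

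The hard part is not the computation but verifying that the constants in properties $i$)-$ii$) are uniform as the base point $x_1$ ranges over all of $\overline\Omega$, not only on interior compact sets as in the original Caffarelli-Gutierrez setting. This is where Corollary \ref{dcoro} and the uniform density Lemma \ref{LeUni} are essential: together they identify $Du(S_h^c(x_1))$ with a centered section of the dual potential $v$ on $\Omega^*$, so the John-ellipsoid picture applies at a boundary base point with the same constants as in the interior. The smallness assumption $h\le r_0$ enters precisely to guarantee this uniform setup, ensuring that $S_h^c(x_1)$ sits inside a neighborhood where the local convex descriptions of $\partial\Omega$ and $\partial\Omega^*$ yield uniform normalization. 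Once that uniformity is in hand, the computation above goes through verbatim for every $x_1\in\overline\Omega$.
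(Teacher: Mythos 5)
Your proposal is correct and follows essentially the same route as the paper: the paper simply normalizes first (subtracting a linear function so $u(x_1)=0$, $Du(x_1)=0$ and applying the affine map from properties $i$)--$iii$)), after which your three-term decomposition collapses to the single estimate $u(z)-u(x_2)-Du(x_2)\cdot(z-x_2)\le h+|Du(x_2)||z-x_2|\le Ch$, with $|Du(x_2)|,|z-x_2|\lesssim h^{1/2}$ in the normalized frame playing the role of your adjoint-identity bound on the cross term. Your closing remark about uniformity of the constants up to the boundary is exactly the point of invoking properties $i$)--$iii$) (i.e.\ Lemma \ref{LeUni} and Corollary \ref{dcoro}), which is also how the paper justifies it.
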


\begin{proof}
Without loss of generality we may assume $x_1=0, u(0)=0$ and $Du(0)=0$. By properties $i$)--$iii$), up to an affine transform $A$, we have
$S_h(0)\approx B_{h^{\frac{1}{2}}}$, $Du(S_h(0))\approx B_{h^{\frac{1}{2}}}.$ 
Hence $|Du(x_2)|\lesssim h^{\frac{1}{2}}.$
Therefore, for any $z\in S_h(0)$ we have 
\begin{equation}
u(z)-\big(u(x_2)+Du(x_2)\cdot (z-x_2)\big)\leq h+|Du(x_2)||z-x_2|\leq Ch,
\end{equation}
which implies that $z\in S_{Ch}(x_2).$
\end{proof}

Using the above lemma it is easy to deduce that 
\begin{lemma}\label{engulfing} There is a universal constant $\delta>0$ such that for any $x_1, x_2\in\overline{\Omega}$ we have 
that if $h_1\leq h_2$ and $S_{\delta h_1}(x_1)\cap S_{\delta h_2}(x_2)\neq \emptyset$, then $S_{\delta h_1}(x_1)\subset S_{h_2}(x_2)$.
\end{lemma}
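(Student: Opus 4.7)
The plan is to deduce this from the engulfing property of Lemma \ref{eng} by invoking it twice at a common point of the overlap, combined with the elementary nesting $S_a(x)\subset S_b(x)$ whenever $a\le b$ (which is immediate from the definition of sub-level set).

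Fix a point $z\in S_{\delta h_1}(x_1)\cap S_{\delta h_2}(x_2)$, where $\delta\in(0,1)$ is a universal constant to be chosen. First I would apply Lemma \ref{eng} to the inclusion $z\in S_{\delta h_1}(x_1)$ to obtain
\[ S_{\delta h_1}(x_1)\subset S_{C\delta h_1}(z), \]
with $C$ the engulfing constant. Since $h_1\le h_2$, this is contained in $S_{C\delta h_2}(z)$ by monotonicity in the height. Next, the inclusion $z\in S_{\delta h_2}(x_2)$ together with Lemma \ref{eng} yields $S_{\delta h_2}(x_2)\subset S_{C\delta h_2}(z)$, so in particular $x_2\in S_{C\delta h_2}(z)$; applying the engulfing lemma once more at this new inclusion gives
\[ S_{C\delta h_2}(z)\subset S_{C^2\delta h_2}(x_2). \]
Chaining these three inclusions yields $S_{\delta h_1}(x_1)\subset S_{C^2\delta h_2}(x_2)$, so choosing $\delta\le 1/C^2$ (a universal constant) produces $S_{\delta h_1}(x_1)\subset S_{h_2}(x_2)$, as desired.

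There is essentially no obstacle here: the argument is pure bookkeeping of engulfing constants. The only point requiring a small amount of care is to verify that all heights appearing in the chain, in particular $C^2\delta h_2$, remain below the threshold $r_0$ of Lemma \ref{eng}. Since $h_2\le \osc_{\bom}u$ is a priori bounded, this can be arranged by shrinking $\delta$ further by a universal factor. The only conceptual subtlety worth flagging is that each application of engulfing enlarges the height by the factor $C$, so that transferring a section based at $x_1$ to one based at $x_2$ inevitably requires two applications of Lemma \ref{eng}, bridged by the trivial nesting step at the common centre $z$.
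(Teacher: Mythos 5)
Your argument is correct and is essentially identical to the paper's own proof: both fix a point $z$ in the overlap, use Lemma \ref{eng} twice (once to put both sections inside $S_{C\delta h_2}(z)$, once to transfer from $z$ back to $x_2$), and then take $\delta = 1/C^2$. Your extra remark about keeping the heights below the threshold $r_0$ is a reasonable point of care that the paper leaves implicit.
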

\begin{proof}
Let $z\in S_{\delta h_1}(x_1)\cap S_{\delta h_2}(x_2)\neq \emptyset,$ by Lemma \ref{eng} we can find some universal constant $C$ so that 
$S_{\delta h_i}(x_i)\subset S_{C\delta h_2}(z)$ for $i=1, 2.$ Then, since $x_2\in S_{C\delta h_2}(z),$ by Lemma \ref{eng} again, we have that 
 $S_{C\delta h_2}(z)\subset  S_{C^2\delta h_2}(x_2).$ The conclusion of the lemma follows by taking $\delta=\frac{1}{C^2}.$
\end{proof}

It is well known that the property of sub-level sets 
stated in Lemma \ref{engulfing} implies the following Vitali covering lemma:
\begin{lemma}[Vitali covering]\label{vital}
 Let $D$ be a compact subset of $\overline{\Omega}$
and let $\{S_{h_x}(x)\}_{x\in D}$ be a family of sub-level sets with $h_x\leq r_0.$ Then, there exist a finite number of 
sub-level sets $\{S_{h_{x_i}}(x_i)\}_{i=1,\ldots, m}$ such that 
$$D\subset \bigcup_{i=1}^mS_{h_{x_i}}(x_i)$$
with $\{S_{\delta h_{x_i}}(x_i)\}_{i=1,\ldots, m}$ disjoint, where $\delta>0$ is a universal constant.
\end{lemma}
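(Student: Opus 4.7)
The plan is to adapt the classical Vitali covering argument to the family of sub-level sets, substituting the engulfing property of Lemma \ref{engulfing} for the metric doubling of Euclidean balls.

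\textbf{Finite subcover.} Since $\delta h_y > 0$, each $y \in D$ lies in $S_{\delta h_y}(y)$, and by continuity of $u$ and $Du$ on $\overline{\Omega}$ every set $S_{\delta h_y}(y)$ is relatively open in $\overline{\Omega}$. Thus $\{S_{\delta h_y}(y)\}_{y \in D}$ is an open cover of the compact set $D$, and compactness yields a finite subcover $\{S_{\delta h_{y_j}}(y_j)\}_{j=1}^N$. I work with the shrunk sets here rather than the original $S_{h_y}(y)$: this is the one small twist needed at the end.

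\textbf{Greedy selection.} Relabel so that $h_{y_1} \geq h_{y_2} \geq \cdots \geq h_{y_N}$, and construct an index set $I \subset \{1,\ldots,N\}$ by the usual greedy rule: include $j$ in $I$ precisely when $S_{\delta h_{y_j}}(y_j)$ is disjoint from $S_{\delta h_{y_i}}(y_i)$ for every previously chosen $i \in I$. By construction the family $\{S_{\delta h_{y_j}}(y_j)\}_{j \in I}$ is pairwise disjoint, giving the required disjointness.

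\textbf{Coverage.} Given $y \in D$, pick $k$ with $y \in S_{\delta h_{y_k}}(y_k)$. If $k \in I$ then $y \in S_{\delta h_{y_k}}(y_k) \subset S_{h_{y_k}}(y_k)$, a selected sub-level set. If $k \notin I$, the greedy rule supplies some $i \in I$ with $i < k$, hence $h_{y_i} \geq h_{y_k}$, and $S_{\delta h_{y_k}}(y_k) \cap S_{\delta h_{y_i}}(y_i) \neq \emptyset$; Lemma \ref{engulfing} then gives $S_{\delta h_{y_k}}(y_k) \subset S_{h_{y_i}}(y_i)$, so in particular $y \in S_{h_{y_i}}(y_i)$. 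Setting $\{x_i\}_{i=1}^m := \{y_j : j \in I\}$ completes the proof. The argument presents no real obstacle, being a standard adaptation of Vitali; the only point worth flagging is that, without using the shrunk family $\{S_{\delta h_y}(y)\}$ (rather than $\{S_{h_y}(y)\}$) to extract the initial finite subcover, the engulfing inequality would only locate $y_k$, not the original point $y$, inside a selected sub-level set.
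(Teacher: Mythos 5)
Your proof is correct: the greedy selection from a finite subcover of the shrunk sets $\{S_{\delta h_y}(y)\}$, combined with Lemma \ref{engulfing} applied to the pair $(h_{y_k}, h_{y_i})$ with $h_{y_k}\leq h_{y_i}$, is exactly the standard argument the paper alludes to when it states that the engulfing property ``implies'' the covering lemma (the paper itself supplies no proof). Your remark about covering $D$ by the shrunk sets rather than the full sub-level sets is the right technical point to flag, since the engulfing conclusion $S_{\delta h_{y_k}}(y_k)\subset S_{h_{y_i}}(y_i)$ only captures the shrunk set.
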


For any $x_0\in \overline{\Omega},$  as in \cite{DFS} we define ${\bf a}(S_h(x_0)):=\|A\|^2$ as the normalised size of $S_h(x_0),$ where $A$ is the affine transformation normalising $S_h(x_0)$ such that
$A\left(S_h(x_0)\right)\approx B_{h^{\frac{1}{2}}}.$

\begin{lemma}\label{decay} 
Assume that $\lambda^{-1}<f<\lambda$, for a constant $\lambda>0$.
There exists a universal constant $K>0$ such that
for any $x_0\in \overline{\Omega},$ we have
\begin{equation}\label{decay1}
\int_{S_h(x_0)}\|D^2u\|\leq K{\bf a} \big|\{K^{-1}{\bf a}\leq \|D^2u\|\leq K{\bf a}\}\cap S_{\delta h}(x_0)\big|,
\end{equation}
where ${\bf a}:={\bf a}(S_h(x_0)).$
\end{lemma}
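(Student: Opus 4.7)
The plan is to follow the interior strategy of De Philippis--Figalli--Savin \cite{DFS}, with the observation that the geometric properties $i)$--$iii)$ together with the uniform density of Lemma \ref{LeUni} allow us to treat boundary sub-level sets as if they were interior ones, up to universal constants. First I would affinely normalize: let $A$ be the linear map provided by property $ii)$ with $A(S_h(x_0)) \approx B_{\sqrt{h}}$, so $\mathbf{a} = \|A\|^2$, and $|\det A|\approx 1$ by property $iii)$. Set $\tilde u(y) := u(A^{-1}y) - \ell(y)$ for an affine $\ell$ making $\tilde u$ vanish with zero gradient at $Ax_0$, and rescale $\hat u(z) := h^{-1}\tilde u(\sqrt{h}\, z)$. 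Then $\hat u$ solves $\det D^2\hat u = \hat f$ with $\hat f$ bounded above and below by universal constants, on a convex set whose height-one sub-level set at $\hat x_0$ is equivalent to $B_1$. The pullback identity
\[
D^2 u(x) = A^T\, D^2 \hat u\bigl(h^{-1/2}Ax\bigr)\, A
\]
converts Hessian bounds on $\hat u$ into the corresponding bounds on $u$ scaled by $\mathbf{a}$.

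Second, I would establish the upper bound $\int_{S_h(x_0)} \|D^2 u\|\lesssim \mathbf{a}\,|S_h(x_0)|$. By Lemma \ref{LeUni} together with property $i)$, a fixed-measure portion of the normalized sub-level set of $\hat u$ sits at positive universal distance from the boundary of its convex domain; using the engulfing Lemma \ref{eng} and the Vitali covering Lemma \ref{vital}, cover $A(S_h(x_0))$ by sub-level sets on which the interior $W^{2,1}$ estimate of De Philippis--Figalli \cite{DF} yields $\int\|D^2\hat u\|\lesssim 1$. This gives $\int_{A(S_h(x_0))}\|D^2\tilde u\|\lesssim h^{n/2}$, and pulling back via the identity above produces the factor $\|A\|^2\,|\det A|^{-1}\approx \mathbf{a}$, as needed.

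Third, I would show that on a subset of $S_{\delta h}(x_0)$ of measure $\gtrsim |S_h(x_0)|$ both $K^{-1}\mathbf{a}\le \|D^2 u\|\le K\mathbf{a}$ hold. At the normalized level, Caffarelli's interior $W^{2,p}$ estimate \cite{C90} applied to $\hat u$ for some fixed $p>1$, combined with Chebyshev's inequality, yields a set $F$ contained in a slightly smaller sub-level set of $\hat u$, of measure a definite fraction of that sub-level set, on which $\|D^2\hat u\|\le C$; the pinch $\det D^2\hat u\ge c$ with $\|D^2\hat u\|\le C$ then forces both eigenvalues of $D^2\hat u$ into $[c',C]$ on $F$. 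The pullback formula, tested against the top singular vector of $A$, shows that the largest eigenvalue of $D^2 u$ on the preimage of $F$ is comparable to $\|A\|^2=\mathbf{a}$, while all eigenvalues are bounded above by $C\mathbf{a}$; the preimage is therefore contained in $\{K^{-1}\mathbf{a}\le\|D^2 u\|\le K\mathbf{a}\}\cap S_{\delta h}(x_0)$ and has measure $\gtrsim h^{n/2}\approx |S_h(x_0)|$.

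Combining the two steps yields \eqref{decay1}. The main obstacle will be the boundary case $x_0\in\partial\Omega$: there the normalized domain of $\hat u$ has its own boundary, and the interior estimates of \cite{C90,DF} do not apply at $\hat x_0$ itself. This is precisely where the preceding subsection is used: via Lemma \ref{LeUni} and property $i)$, the normalized sub-level set contains a definite-measure ``interior core'' at universal distance from its boundary, and engulfing plus Vitali covering (Lemmas \ref{eng}, \ref{vital}) let us transfer the interior estimates to cover all of $S_h(x_0)$ at the cost of universal constants only.
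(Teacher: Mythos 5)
Your overall skeleton (normalize by $A$ with $\det A=1$, prove an $L^1$ Hessian bound of order $\mathbf{a}\,h^{n/2}$, then Chebyshev plus the determinant pinching $\lambda^{-1}<\det D^2\bar u<\lambda$ to locate a definite-measure set where $\|D^2u\|\approx\mathbf{a}$) matches the paper. But the mechanism you propose for the crucial first inequality, $\int_{S_h(x_0)}\|D^2u\|\lesssim \mathbf{a}\,h^{n/2}$, has a genuine gap. You want to cover $A(S_h(x_0))$ by smaller sub-level sets and apply the interior $W^{2,1}$ estimate of \cite{DF} on each. When $x_0\in\partial\Omega$, however, the covering sections centred at points of $S_h(x_0)\cap\partial\Omega$ are themselves boundary sections, to which the interior estimate simply does not apply; your proposed fix (a ``definite-measure interior core'' from Lemma \ref{LeUni}) controls where mass is, but gives no control on $\int\|D^2u\|$ over the part of $S_h(x_0)$ near $\partial\Omega$, where the Hessian could a priori concentrate. (There is also a secondary issue: Lemma \ref{vital} makes only the $\delta$-dilates disjoint, so summing interior bounds over the cover needs an overlap count you have not supplied.) The paper's route avoids all of this with one line: by convexity $\|D^2\bar u\|\le\Delta\bar u$, so the divergence theorem gives
\begin{equation*}
\int_{A(S_h(x_0))}\|D^2\bar u\|\;\le\;\int_{\partial(A(S_h(x_0)))}\bar u_\nu\;\lesssim\;\sup|D\bar u|\cdot h^{\frac{n-1}{2}}\;\lesssim\;h^{\frac{n}{2}},
\end{equation*}
where the gradient bound $|D\bar u|\lesssim h^{1/2}$ on $A(S_h(x_0))$ comes from the duality property $ii)$ ($D\bar u(A(S_h))\approx B_{h^{1/2}}$). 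This argument is insensitive to whether the section touches $\partial\Omega$, which is exactly why the lemma holds for all $x_0\in\overline\Omega$.

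A smaller but related point: in your third step you invoke Caffarelli's interior $W^{2,p}$ estimate \cite{C90} to produce the good set $F$, which again is unavailable at $\hat x_0$ when $x_0\in\partial\Omega$. It is also unnecessary: once you have $\int_{A(S_{\delta h})}\|D^2\bar u\|\le C_1|S_{\delta h}|$, Chebyshev alone gives $|\{\|D^2\bar u\|\le 2C_1\}\cap A(S_{\delta h}(x_0))|\ge\tfrac12|S_{\delta h}|$, and the determinant bounds then pinch the eigenvalues on that set; your final pullback step ($K^{-1}A^TA\le D^2u\le KA^TA$, hence $\|D^2u\|\approx\|A\|^2=\mathbf{a}$) is correct and is the same as the paper's \eqref{es14}. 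So the fix is to replace both uses of interior estimates by the divergence-theorem bound and Chebyshev; as written, the proposal does not close at boundary points.
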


\begin{proof}
By subtracting a linear function we may assume that $u(x_0)=0$, $Du(x_0)=0.$
Let $A$ be the affine transformation such that $A\left(S_h(x_0)\right)\approx B_{h^{\frac{1}{2}}}$ with $\det\, A=1.$ 
Let $\bar{u}(x):=u(A^{-1}x).$ Then, $\det D^2\bar{u}=\tilde{f},$ where $\lambda^{-1}<\tilde{f}(x):=f(A^{-1}x)<\lambda.$
Since $D\bar{u}(A(S_h(x_0)))\approx B_{h^{\frac{1}{2}}},$ we have $|D\bar{u}(z)|\lesssim h^{\frac{1}{2}}$ for any $z\in  A(S_h(x_0))$ and
$|\partial (AS_h(x_0))|\approx h^{\frac{n-1}{2}}.$
Hence, 
\begin{equation}\label{es11}
\int_{S_h(x_0)}\|D^2u\|\leq {\bf a}\int_{A(S_h(x_0))}\|D^2\bar{u}\|={\bf a}\int_{\partial (A(S_h(x_0)))}\bar{u}_\nu\approx {\bf a} h^{\frac{n}{2}}\leq C_1{\bf a}|S_{\delta h}(x_0)|,
\end{equation}
where $C_1$ is a universal constant.

From \eqref{es11} we can deduce that 
\begin{equation}\label{es12}
\big|\{\|D^2\bar{u}\|\leq 2C_1\}\cap A(S_{\delta h}(x_0))\big|\geq \frac{1}{2}|S_{\delta h}|.
\end{equation}
On the other hand, since $\lambda^{-1}<\det D^2\bar{u}<\lambda,$ we have 
\begin{equation}\label{es13}
K^{-1}I\leq D^2\bar{u}<KI\qquad \text{ in}\quad \{\|D^2\bar{u}\|\leq 2C_1\}.
\end{equation}
By the definition of $\bar{u}$  it is straightforward to check that
\begin{equation}\label{es14}
\{K^{-1}I\leq D^2\bar{u}<KI\}\subset A\{K^{-1}{\bf a}\leq \|D^2u\|\leq K{\bf a}\}.
\end{equation}
Now, \eqref{decay1} follows from \eqref{es11}--\eqref{es14}.
\end{proof}

Let $D_k:=\{x\in \overline{\Omega}: \|D^2u(x)\|\geq N^k\},$ where $N$ is some large constant to be determined later. In the following lemma we shall establish a geometric 
decay of $\int_{D_k} \|D^2u\|.$

\begin{lemma}\label{gdecay} There exists a universal constant $N$ such that 
\begin{equation}\label{geom}
\int_{D_{k+1}} \|D^2u\|\leq (1-\tau)\int_{D_k} \|D^2u\|,
\end{equation}
for some universal constant $\tau\in (0,1).$
\end{lemma}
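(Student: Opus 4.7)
The overall approach is a Calder\'on--Zygmund-type stopping-time argument in the spirit of \cite{DFS}. The plan is to cover $D_{k+1}$ by sub-level sets whose normalized affine size ${\bf a}$ is calibrated to $N^k$, so that the localized bound in Lemma \ref{decay} forces a definite portion of the mass of $\|D^2u\|$ on each piece to lie below the $N^{k+1}$-level, i.e., in $D_k\setminus D_{k+1}$. Summing over a disjoint Vitali subfamily and comparing to $\int_{D_k\setminus D_{k+1}}\|D^2u\|$ should then yield the geometric decay \eqref{geom}.

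Concretely, for each $x_0\in D_{k+1}$ I would first select a height $h(x_0)\in(0,r_0]$ so that
$$KN^k\ \le\ {\bf a}\bigl(S_{h(x_0)}(x_0)\bigr)\ \le\ C_0 KN^k,$$
for universal constants $K$ (the constant appearing in Lemma \ref{decay}) and $C_0$. This is possible once $k$ exceeds a threshold $k_0$ that can be absorbed into the final constant, because as $h\downarrow 0$ the affine size is comparable to $\|D^2u(x_0)\|\ge N^{k+1}$ at Lebesgue points of $D^2u$, while ${\bf a}(S_{r_0}(x_0))$ is universally bounded; continuity of $h\mapsto{\bf a}(S_h(x_0))$ then guarantees an intermediate height hitting the prescribed window.

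Next, I would apply the Vitali covering Lemma \ref{vital} to $\{S_{h(x_0)}(x_0)\}_{x_0\in D_{k+1}}$ to extract finitely many $\{S_{h_i}(x_i)\}_{i=1,\dots,m}$ with $D_{k+1}\subset\bigcup_i S_{h_i}(x_i)$ and the shrunk sets $\{S_{\delta h_i}(x_i)\}$ pairwise disjoint. Applying Lemma \ref{decay} on each piece gives
$$\int_{S_{h_i}(x_i)}\|D^2u\|\ \le\ K\,{\bf a}_i\,\bigl|\{K^{-1}{\bf a}_i\le\|D^2u\|\le K{\bf a}_i\}\cap S_{\delta h_i}(x_i)\bigr|.$$
The calibration of ${\bf a}_i$ gives $K^{-1}{\bf a}_i\ge N^k$ and $K{\bf a}_i\le K^2 C_0 N^k$, so choosing $N>K^2 C_0$ places the indicator set inside $(D_k\setminus D_{k+1})\cap S_{\delta h_i}(x_i)$. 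Summing the above over $i$, using disjointness of the $S_{\delta h_i}(x_i)$, and then the Chebyshev-type bound $|D_k\setminus D_{k+1}|\le N^{-k}\int_{D_k\setminus D_{k+1}}\|D^2u\|$ produces
$$\int_{D_{k+1}}\|D^2u\|\ \le\ K^2 C_0 \int_{D_k\setminus D_{k+1}}\|D^2u\|,$$
and splitting $\int_{D_k}=\int_{D_k\setminus D_{k+1}}+\int_{D_{k+1}}$ yields \eqref{geom} with $\tau=1/(K^2 C_0+1)$.

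The main obstacle is the first step: one must verify, for each $x_0\in D_{k+1}$, that a height $h(x_0)$ with ${\bf a}(S_{h(x_0)}(x_0))$ in the prescribed window really exists. For interior points this follows from the interior theory of \cite{CG,DFS}; for boundary points it rests on the properties i)--iii) above, which are the boundary analogues of the interior normalization and engulfing, available here via the uniform obliqueness and uniform density of \cite{CLW}. A minor subtlety is that $\|D^2u(x_0)\|$ is only defined almost everywhere, but since \eqref{geom} is an integral inequality it suffices to perform the selection at Lebesgue points of $D^2u$, which form a set of full measure in $D_{k+1}$.
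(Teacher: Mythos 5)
Your proposal is correct and follows essentially the same route as the paper: calibrate a stopping height so that ${\bf a}(S_{h_x}(x))=KN^k$ (the paper uses exact equality where you use a window, an immaterial difference), extract a Vitali subfamily, apply Lemma \ref{decay} on each piece, use disjointness of the shrunk sets plus Chebyshev to bound by $\int_{D_k\setminus D_{k+1}}\|D^2u\|$, and rearrange to get the geometric decay. Your added care about Lebesgue points and the small-$k$ threshold addresses details the paper leaves implicit, but the argument is the same.
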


\begin{proof} Let $N\gg K,$ and to be fixed later.
For any $x\in D_{k+1},$ since ${\bf a}(S_h(x))\rightarrow \|D^2u(x)\|$ as $h\rightarrow 0$ and ${\bf a}(S_\delta(x))$ is bounded by some universal constant, we can find a height $h_x$ such that
${\bf a}(S_{h_x}(x))=KN^k.$

Let $\{S_{h_i}(x_i)\}_{i=1}^m$ be a Vitali cover of $D_{k+1}.$ Then, by Lemma \ref{decay} we have 
\begin{equation}\label{decay2}
\int_{S_{h_i}(x_i)}\|D^2u\|\leq K^2N^k \big|\{N^k\leq \|D^2u\|\leq K^2N^k\}\cap S_{\delta h_i}(x_i)\big|,
\end{equation}

Summing up these inequalities and using the facts that $D_{k+1}\subset \bigcup S_{h_i}(x_i)$ and $S_{\delta h_i}(x_i)$ are disjoint, we obtain
\begin{eqnarray}\label{decay3}
\int_{D_{k+1}}\|D^2u\|&\leq& K^2N^k \big|\{N^k\leq \|D^2u\|\leq K^2N^k\}\cap \overline{\Omega}\big|\\
&\leq&C\int_{D_k\backslash D_{k+1}}\|D^2u\|
\end{eqnarray} 
provided $N>K^2.$
Estimate \eqref{geom} follows by adding $C\int_{D_{k+1}}\|D^2u\|$ on both sides of \eqref{decay3} and taking $\tau=1/(1+C).$
\end{proof}

Theorems \ref{epsilon} and {1.3\textprime}  easily follows from Lemma \ref{gdecay} by an elementary computation.

\end{document}